\documentclass{amsart}

\usepackage{t1enc}
\usepackage[latin2]{inputenc}
\usepackage{amsmath}
\usepackage{amssymb}
\usepackage{amsthm}
\usepackage{eufrak}
\usepackage{verbatim}
\usepackage{color}
\usepackage{pstricks,pst-node}
\usepackage{enumerate}
\usepackage{xmpmulti}
\usepackage{psfrag}
\usepackage{graphicx}

\newtheorem{thm}{Theorem}[section]
\newtheorem{prop}[thm]{Proposition}
\newtheorem{prob}[thm]{Problem}
\newtheorem{dfn}[thm]{Definition}
\newtheorem{clm}[thm]{Claim}

\newtheorem{cor}[thm]{Corollary}

\newcommand{\rr}{\leq}
\newcommand{\rrr}{\preceq}

\title{The Collins-Roscoe mechanism and D-spaces}
\author{Xu Yuming$^1$}
\author{D\'aniel Soukup$^2$}

\address{$^1$ School of Mathematics, Shandong University,Jinan, China\newline E-mail: xuyuming@sdu.edu.cn }
\address{$^2$ Institute of Mathematics, E\"otv\"os Lor\'and University, Budapest, Hungary\newline E-mail: daniel.t.soukup@gmail.com}
\thanks{Corresponding author. E-mail: daniel.t.soukup@gmail.com \\
This work is supported by NSF of Shandong Province(No.ZR2010AM019) and NNSF of China(No.10971186).}

\keywords{Collins-Roscoe mechanism; well-ordered $(\alpha A)$; linearly semi-stratifiable space; elastic space; $D$-space}
\subjclass[2000]{Primary 54D20. Secondary 54E20}

\begin{document}

\begin{abstract} We prove that if a space  $X$ is well ordered $(\alpha A)$, or linearly semi-stratifiable, or elastic then $X$ is a $D$-space.
\end{abstract}

\maketitle

\section{Introduction}

The connections between D-spaces and generalized metric spaces has been extensively studied. The aim of this paper is to prove the following theorems.
\begin{itemize}
  \item Spaces satisfying well-ordered $(\alpha A)$ are $D$-spaces.
  \item Linearly semi-stratifiable spaces are D-spaces.
  \item Elastic spaces are D-spaces.
\end{itemize}
The proofs are based on Gruenhage's method of \emph{sticky relations}.\\

The paper has the following structure. In Section \ref{secCR} we introduce the Collins-Roscoe mechanism and give the basic definitions. In Section \ref{secsticky} we define the notion of D-spaces and briefly introduce how sticky relations are used to prove that a certain space is D. In Sections \ref{secA}, \ref{secstrat} and \ref{secelastic} we prove the three results above.

\section{The Collins-Roscoe mechanism}\label{secCR}

The expression \emph{Collins-Roscoe structuring mechanism} refers to several definitions of generalized metric properties. In \cite{s1} P. J. Collins and A. W. Roscoe introduced the following notion.

 \begin{dfn}\label{condG}
We say that \emph{a space $X$ satisfies condition (G)} iff there is $\mathcal{W} = \{\mathcal{W}(x): x \in X\}$, where $\mathcal{W}(x)=\{W(m,x):m\in\omega\}$, such that $x\in W(m,x) \subseteq X$ with the following property. For every open set $U$ containing $x \in X$, there exists an open set $V(x, U)$ containing $x$ such that $y \in V(x, U)$ implies $x \in W(m, y) \subseteq U $ for some $m \in \omega$.
\end{dfn}

If we strengthen condition $(G)$ by not allowing the natural number $m$ to vary with $y$, then we say that \emph{$X$ satisfies condition $(A)$}. The precise definition is the following.

 \begin{dfn}We say that \emph{a space $X$ satisfies condition (A)} iff there is $\mathcal{W} = \{\mathcal{W}(x): x \in X\}$, where $\mathcal{W}(x)=\{W(m,x):m\in\omega\}$, such that $x\in W(m,x) \subseteq X$ with the following property. For every open set $U$ containing $x\in X$, there exists an open set $V(x, U)$ containing $x$ and a natural number $m = m(x, U)$ such that
$x \in W(m, y) \subseteq U $ for all $y \in V(x, U)$.
\end{dfn}

If each $W(n, x)$ is open (a neighborhood of $x$), we say that \emph{$X$ satisfies open (neighborhood) $(G)$ or open(neighborhood) $(A)$}, respectively. If $W(n+1, x) \subseteq W(n, x)$ for each $n \in \omega$, we say that \emph{$X$ satisfies decreasing $(G)$ or decreasing $(A)$}.

The Collins-Roscoe mechanism has been extensively studied, and a lot of significant results have been obtained. Let us summarize {\cite[Theorem 1]{s1}} and {\cite[Theorem 8]{s3}} in \ref{metr}.

\begin{thm}\label{metr}The following are equivalent for a space $X$.
\begin{enumerate}[(1)]
  \item $X$ is metrisable,
  \item $X$ satisfies decreasing open $(A)$,
  \item $X$ satisfies decreasing open $(G)$,
  \item $X$ satisfies decreasing neighborhood $(A)$.
\end{enumerate}
\end{thm}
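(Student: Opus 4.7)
The pattern I would follow is to show $(1)\Rightarrow(2)\Rightarrow(3)$ and $(2)\Rightarrow(4)$ by direct weakenings, then close the cycle with the two substantive implications $(3)\Rightarrow(1)$ and $(4)\Rightarrow(1)$.

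For $(1)\Rightarrow(2)$, given a compatible metric $d$, define $W(n,x)=\{y\in X:d(x,y)<2^{-n}\}$. These are open and decreasing in $n$. Given open $U\ni x$, pick $m$ with the ball $B(x,2^{-(m-1)})\subseteq U$ and set $V(x,U)=B(x,2^{-m})$. The triangle inequality gives $x\in W(m,y)\subseteq U$ for every $y\in V(x,U)$, witnessing decreasing open $(A)$. The implications $(2)\Rightarrow(3)$ and $(2)\Rightarrow(4)$ are tautological weakenings: $(A)$ is $(G)$ with $m$ not depending on $y$, and an open set is certainly a neighborhood.

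For $(3)\Rightarrow(1)$ and $(4)\Rightarrow(1)$, the first observation in either setting is that applying the defining condition at the point $y=x$ to an arbitrary open $U\ni x$ shows $\{W(n,x):n\in\omega\}$ is a decreasing local base at $x$, so $X$ is first countable; the open/neighborhood hypothesis together with the same argument yields regularity. The plan for $(3)\Rightarrow(1)$ is then to verify that $\mathcal U_n=\{W(n,y):y\in X\}$ forms a development: given open $U\ni x$, condition $(G)$ supplies $V(x,U)$ such that every $y\in V(x,U)$ carries some $m(y)$ with $x\in W(m(y),y)\subseteq U$, and the decreasing hypothesis allows the index to be taken uniform in $y$ on a smaller open set. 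Combined with regularity, developability then produces a metric via Moore's theorem, once paracompactness (equivalently here, collectionwise normality) is in hand.

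For $(4)\Rightarrow(1)$ I would try to reduce to $(2)$ by replacing each $W(n,x)$ by its interior $W^{\circ}(n,x)$ and showing these still witness decreasing open $(A)$. The delicate point is preserving the \emph{uniform} index $m=m(x,U)$ of condition $(A)$ under taking interiors: one shrinks $V(x,U)$ using regularity so that for $y$ in the smaller open neighborhood, $x$ lies in $W^{\circ}(m,y)\subseteq U$; the decreasing hypothesis is essential here because it permits freely enlarging $m$ to absorb the shrinkage. Once decreasing open $(A)$ is established, $(2)\Rightarrow(1)$ closes the argument.

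The chief obstacle in both hard directions is extracting a genuinely strong global covering property from the purely pointwise Collins--Roscoe data: concretely, verifying that the developed structure of the $W(n,y)$'s is $\sigma$-locally finite, or that $X$ is paracompact. This is precisely where the decreasing assumption has to be used nontrivially, typically via a star-refinement argument driven by the assignment $(x,U)\mapsto V(x,U)$; the non-decreasing analogues of $(G)$ and $(A)$ are strictly weaker than metrisability, so any proof must funnel through this point.
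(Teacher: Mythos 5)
There is a genuine gap. The easy implications $(1)\Rightarrow(2)$, $(2)\Rightarrow(3)$ and $(2)\Rightarrow(4)$ are fine (and are indeed trivial weakenings), but the two hard directions $(3)\Rightarrow(1)$ and $(4)\Rightarrow(1)$ are only announced, not proved. In $(3)\Rightarrow(1)$ you assert that ``the decreasing hypothesis allows the index to be taken uniform in $y$ on a smaller open set''; this uniformization is essentially the passage from decreasing open $(G)$ to an $(A)$-type condition and is not a one-line consequence of monotonicity -- for a fixed $x$ and $U$ the indices $m(y)$ can a priori be unbounded as $y$ ranges over any neighborhood of $x$, and nothing you wrote rules this out. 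In $(4)\Rightarrow(1)$ you invoke regularity to shrink $V(x,U)$ before taking interiors, but regularity itself was only claimed, not derived from decreasing neighborhood $(A)$ (first countability is immediate from taking $y=x$; regularity is not). Finally, you yourself identify ``the chief obstacle'' -- extracting a global covering property ($\sigma$-local finiteness of the candidate development, paracompactness or collectionwise normality) -- and leave it entirely open, together with the intended star-refinement argument. Since this is exactly where all the work lies, the proposal does not constitute a proof of either hard implication.

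For comparison: the paper itself does not prove this theorem at all; it is quoted as a summary of Theorem 1 of Collins--Roscoe (\emph{Criteria for metrisability}) and Theorem 8 of Collins--Reed--Roscoe--Rudin (\emph{A lattice of conditions on topological spaces}), and those papers supply precisely the machinery your sketch is missing. If you want a self-contained argument, you would need to carry out the program you outline: show that a decreasing open $(G)$ (respectively decreasing neighborhood $(A)$) structuring yields a development together with enough separation/covering strength to apply a classical metrization theorem, which is the substantive content of the cited results rather than something recoverable from the statements alone.
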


Stratifiable spaces are well known generalizations of metric spaces; see \cite{genmet}. They have a characterization using the Collins-Roscoe mechanism as well. Theorem \ref{strat} summarizes {\cite[Theorem 2.2]{s2}} and a remark from \cite{s3}.

\begin{thm}\label{strat} The following are equivalent for a space $X$.
\begin{enumerate}[(1)]
  \item $X$ is stratifiable,
  \item $X$ satisfies decreasing $(G)$ and has countable pseudo-character,
  \item $X$ satisfies decreasing $(A)$ and has countable pseudo-character.
\end{enumerate}
\end{thm}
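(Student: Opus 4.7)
The plan is to prove the cycle $(1) \Rightarrow (3) \Rightarrow (2) \Rightarrow (1)$. The implication $(3) \Rightarrow (2)$ is immediate from the definitions: condition $(A)$ is a strengthening of $(G)$ in which $m$ depends only on $(x,U)$, not on $y \in V(x,U)$. Thus any family $\mathcal W$ witnessing decreasing $(A)$ also witnesses decreasing $(G)$, and the countable pseudo-character hypothesis carries over unchanged.

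For $(1) \Rightarrow (3)$ I would first note that stratifiable spaces are first countable, so countable pseudo-character is automatic. I would then invoke the standard $g$-function characterization of stratifiability: there is $g: \omega \times X \to \tau$ with $x \in g(n,x)$, $g(n+1,x) \subseteq g(n,x)$, $\{g(n,x) : n \in \omega\}$ a neighborhood base at $x$, and the property that $x \in g(n, x_n)$ for all $n$ implies $x_n \to x$. Setting $W(n,x) := g(n,x)$ yields a decreasing family of open neighborhoods. To verify $(A)$ for a given open $U \ni x$, pick $m$ with $g(m,x) \subseteq U$ and set $V(x,U) := g(m,x)$; argue by contradiction, extracting a sequence $y_n \in V(x,U)$ with $x \in g(n, y_n)$ but $g(n, y_n) \not\subseteq U$, whence the convergence property forces $y_n \to x$ and a single large index must eventually satisfy the requirement.

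The substantive direction is $(2) \Rightarrow (1)$. Let $\mathcal W$ witness decreasing $(G)$ and fix, using countable pseudo-character, a decreasing family $\{N_n(x) : n \in \omega\}$ of open neighborhoods with $\bigcap_n N_n(x) = \{x\}$. Replacing $W(n,x)$ by $W(n,x) \cap N_n(x)$, we may assume $\bigcap_n W(n,x) = \{x\}$. For each open $U$ and each $n$, define $U_n$ by gathering the open sets $V(x,U)$ supplied by the $(G)$ data, over those $x \in U$ for which the witness $m(x, U, y)$ can be bounded by $n$ uniformly in $y$. The required properties of a stratification are then: $U = \bigcup_n U_n$ (pointwise from $(G)$), monotonicity $U \subseteq U' \Rightarrow U_n \subseteq U'_n$ (since $W(n,x)$ is assigned independently of $U$), and $\overline{U_n} \subseteq U$ (combining decreasingness with $\bigcap_n W(n,x) = \{x\}$ to confine limit points inside $U$).

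The main obstacle is $(2) \Rightarrow (1)$: a pointwise local condition must be promoted to a global monotone closure operator on open sets. The inclusion $\overline{U_n} \subseteq U$ is the delicate verification, and it is exactly there that the decreasing hypothesis and countable pseudo-character are both used — the former supplies a nested filter base at each point, and the latter pins its intersection down to the point itself. Without countable pseudo-character, decreasing $(G)$ is strictly weaker than stratifiability, so the extra hypothesis is essential, and tracking where precisely it enters the construction of the operator $U \mapsto (U_n)_n$ is the subtle point of the argument.
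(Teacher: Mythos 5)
This theorem is not actually proved in the paper: it is quoted as known, citing Balogh \cite{s2} and a remark in \cite{s3}, so there is no in-paper argument to measure your proposal against; judged on its own terms, however, your sketch has genuine gaps in both nontrivial directions. For $(1)\Rightarrow(3)$ you start from the claim that stratifiable spaces are first countable and from a $g$-function whose values $\{g(n,x):n\in\omega\}$ form a neighborhood base at $x$. Both fail: stratifiable spaces need not be first countable (the sequential fan, a closed image of a metric space, is stratifiable but not first countable), and the $g$-function you describe characterizes Nagata spaces rather than stratifiable ones. Countable pseudo-character does hold, but for a different reason (stratifiable spaces are semi-stratifiable, so closed sets, in particular points, are $G_\delta$). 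More fatally, your witnesses $W(n,x):=g(n,x)$ are open and decreasing, so what you would be establishing is decreasing \emph{open} $(A)$, which by Theorem \ref{metr} is equivalent to metrisability; since there exist non-metrisable first countable stratifiable spaces (e.g.\ the bow-tie space), no argument along these lines can succeed. The whole content of the cited result is that the $W(n,x)$ are allowed to be non-open sets manufactured from the stratification. Even internally, the contradiction step is incomplete: negating $(A)$ only provides points $y_n$ with $x\notin g(n,y_n)$ \emph{or} $g(n,y_n)\not\subseteq U$, and even when $y_n\to x$ nothing in the stated properties of $g$ contradicts $g(n,y_n)\not\subseteq U$.

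For $(2)\Rightarrow(1)$, defining $U_n$ from those $x\in U$ ``for which the witness $m(x,U,y)$ can be bounded by $n$ uniformly in $y$'' begs the question: under condition $(G)$, as opposed to $(A)$, there need be no neighborhood of $x$ on which the index $m$ is bounded, so the covering claim $U=\bigcup_n U_n$ is unjustified — if a uniform bound were always available on a shrunken $V$, the distinction between $(G)$ and $(A)$ would evaporate. Monotonicity is also not automatic, since $V(x,U)$ genuinely depends on $U$ and need not do so monotonically, and the crucial inclusion $\overline{U_n}\subseteq U$, which you yourself single out as the delicate point, is only announced; no mechanism is given by which decreasingness plus $\bigcap_n W(n,x)=\{x\}$ controls limit points of a union of the sets $V(x,U)$ taken over many $x$. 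In short, both substantive implications are sketched around exactly their difficulties; for a complete argument one has to follow Balogh's construction in \cite{s2} (with the remark in \cite{s3} for the $(A)$ form), where the stratification and the non-open point-network are built explicitly.
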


We define a third condition denoted by \emph{(F)}, which is weaker than condition (G).

\begin{dfn}We say that \emph{a space $X$ satisfies condition (F)} iff there is $\mathcal{W} = \{\mathcal{W}(x): x \in X\}$ such that $x\in W\subseteq X$ for all $W\in\mathcal{W}(x)$  with the following property. For every open $U$ containing $x$ there is an open $V = V(x, U)$ containing $x$ such that $y \in V$ implies $x \in W \subseteq U$ for some $W \in \mathcal{W}(y)$.
\end{dfn}

We say that \emph{$X$ satisfies well-ordered $(F)$} if each $\mathcal{W}(x)$ is well-ordered by reverse inclusion. We make a remark about well ordered (F) spaces in the last section.

\section{D-spaces and sticky relations}\label{secsticky}

In \cite{s7}, van Douwen and Pfeffer introduced the concept of $D$-spaces.

\begin{dfn}An \emph{open neighborhood assignment (ONA)} on a space $(X, \tau)$
is a function $N : X \rightarrow \tau$ such that $x \in N(x)$ for all $x\in X$. $X$ is a $D$-space iff
for all ONA $N$, there is a closed discrete subset $D$ of $X$ such that
$N[D] = \{N(d): d \in D\}$ covers $X$.
\end{dfn}

We recommend G. Gruenhage's paper \cite{gg} which gives a full review on what we know and do not know about D-spaces.

The following method of G. Gruenhage \cite{gnote} provides us a useful tool for proving that a spaces is a D-space.

\begin{dfn} Let $X$ be a space. A relation $R$ on $X$ is \emph{nearly good} iff $x\in \overline{A}$ implies that there is $y\in A$ such that $xRy$. Let $N$ denote an ONA. If $X'\subseteq X$ and $D\subseteq X$ we say that \emph{$D$ is $N$-sticky mod $R$ on $X'$} if whenever $x \in X'$ and $xRy$ for some $y\in D$ then $x\in \cup N[D]$.
\end{dfn}

\begin{thm}[{\cite[Proposition 2.2]{gnote}}]\label{g1} Let $X$ be a space and $N$ an ONA on $X$. Suppose $R$ is a nearly good relation on $X$ such that every non-empty closed subset $F$ of $X$ contains a non-empty closed discrete subset $D$ which is $N$-sticky
mod $R$ on $F$. Then there is a closed discrete $D^*$ in $X$ with $\cup N[D^*]=X$.
\end{thm}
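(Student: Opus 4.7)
The plan is to construct $D^*$ by a transfinite recursion that, at each stage, absorbs a non-empty $N$-sticky closed discrete piece from the still-uncovered closed set; closed-discreteness of the union will then be verified by a parallel transfinite induction in which near-goodness of $R$ does the work at limit stages.

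More precisely, set $F_0 = X$ and, having defined $D_\beta$ for $\beta < \alpha$, let
\[
F_\alpha \;=\; X \setminus \bigcup N\!\left[\bigcup_{\beta<\alpha} D_\beta\right].
\]
If $F_\alpha = \emptyset$, halt; otherwise $F_\alpha$ is closed and non-empty, so the hypothesis of the theorem yields a non-empty closed discrete $D_\alpha \subseteq F_\alpha$ which is $N$-sticky mod $R$ on $F_\alpha$. Since $D_\alpha \subseteq F_\alpha$ avoids $\bigcup N[D_\beta] \supseteq D_\beta$ for every $\beta<\alpha$, the $D_\alpha$ are pairwise disjoint and non-empty, so a cardinality argument forces the recursion to terminate at some stage $\alpha^*$. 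Put $D^* = \bigcup_{\alpha<\alpha^*} D_\alpha$; since $F_{\alpha^*}=\emptyset$ we have $\bigcup N[D^*] = X$ automatically.

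The remaining task, and the main obstacle, is to show $D^*$ is closed discrete. I would prove by transfinite induction on $\alpha \le \alpha^*$ that $G_\alpha := \bigcup_{\beta<\alpha} D_\beta$ is closed discrete in $X$. The successor case is routine: $D_\alpha$ is closed discrete in the closed set $F_\alpha$, which is disjoint from $\bigcup N[G_\alpha]$, so for each old point $g \in G_\alpha$ the open neighborhood $N(g)$ already avoids $D_\alpha$, while each new point $d \in D_\alpha$ admits an open neighborhood avoiding $G_\alpha$ (as $G_\alpha$ is closed and $d \notin G_\alpha$).

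The limit case is where near-goodness is essential. Let $\alpha$ be a limit and let $x \in \overline{G_\alpha}$. Near-goodness applied to $G_\alpha$ produces $y \in G_\alpha$ with $xRy$; let $\gamma_0 < \alpha$ be the index with $y \in D_{\gamma_0}$. If $x \in F_{\gamma_0}$, then stickiness of $D_{\gamma_0}$ on $F_{\gamma_0}$ places $x$ in $N(d)$ for some $d \in D_{\gamma_0}$; otherwise $x \in N(d)$ for some $d \in G_{\gamma_0}$. In either case $N(d)$ is an open neighborhood of $x$ meeting no $D_\beta$ with $\beta>\gamma_0$ (since $D_\beta \subseteq F_\beta \subseteq X \setminus N(d)$), and hence $N(d) \cap G_\alpha \subseteq G_{\gamma_0+1}$. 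By the inductive hypothesis $G_{\gamma_0+1}$ is closed discrete, which forces $x \in G_{\gamma_0+1} \subseteq G_\alpha$ and moreover shows that the same neighborhood of $x$ witnesses local discreteness in $G_\alpha$. Specializing to $\alpha=\alpha^*$ yields that $D^*$ is closed discrete, completing the proof.
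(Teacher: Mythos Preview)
Your argument is correct and is essentially the standard proof: the paper does not supply its own proof of this statement but merely quotes it from Gruenhage \cite{gnote}, and what you have written is precisely the transfinite construction and closed-discrete verification that appears there. Two minor points of phrasing: in the successor step, $N(g)$ alone need not isolate $g$ inside $G_\alpha$, so you should explicitly intersect it with a discreteness witness from the inductive hypothesis; similarly, in the limit step ``the same neighborhood'' should be read as $N(d)$ intersected with a neighborhood isolating $x$ in $G_{\gamma_0+1}$. With those clarifications the proof is complete.
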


Let $Z\subseteq X$ and $N$ an ONA on $X$. We say that \emph{$Z$ is $N$-close} iff $Z\subseteq N(x)$ for all $x\in Z$.

\begin{thm}[{\cite[Proposition 2.4]{gnote}}]\label{g2} Let $N$ be a neighborhood assignment on $X$. Suppose there is a
nearly good $R$ on $X$ such that for any $y\in X$, $ R^{-1}(y)\setminus N(y)$ is the countable union of $N$-close sets. Then there is a closed discrete $D$ such that $\cup N[D] = X$.
\end{thm}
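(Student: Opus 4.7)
The plan is to invoke Theorem~\ref{g1} and so reduce matters to the following claim: for every nonempty closed $F\subseteq X$ there is a nonempty closed discrete $D\subseteq F$ that is $N$-sticky mod $R$ on $F$. Fix such an $F$, write $R^{-1}(y)\setminus N(y)=\bigcup_{n<\omega}B_n(y)$ with each $B_n(y)$ being $N$-close, and pick any $d_0\in F$. I would then construct $D=\bigcup_{k<\omega}D_k$ as an increasing chain of \emph{finite} sets, starting from $D_0=\{d_0\}$.

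The recursion I have in mind is: at stage $k+1$, for each of the finitely many pairs $(y,n)\in D_k\times(k+1)$ with $B_n(y)\cap F\not\subseteq\bigcup N[D_k]$, choose a witness $a_{(y,n)}\in(B_n(y)\cap F)\setminus\bigcup N[D_k]$, and set $D_{k+1}$ to be $D_k$ together with all such witnesses. This keeps every $D_k$ finite while making sure that eventually every pair $(y,n)\in D\times\omega$ gets processed.

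Checking that $D$ is $N$-sticky mod $R$ on $F$ should be routine. Given $x\in F$ and $y\in D_k$ with $xRy$, either $x\in N(y)\subseteq\bigcup N[D]$, or $x\in B_n(y)$ for some $n$; in the latter case, the stage that first processes $(y,n)$ either already had $B_n(y)\cap F\subseteq\bigcup N[D_m]$ (so $x\in\bigcup N[D]$), or produced a witness $a_{(y,n)}\in D$, and then $x\in B_n(y)\subseteq N(a_{(y,n)})$ by $N$-closeness.

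The main obstacle, and the point at which near-goodness really bites, will be verifying that $D$ is closed discrete in $X$. If $x$ were an accumulation point of $D$, then $x\in\overline{D}\subseteq F$, and near-goodness would give $y\in D$ with $xRy$. The same dichotomy used for stickiness then produces some $z\in D_m$ with $x\in N(z)$. But by construction, every point added to $D$ at a stage $>m$ lies outside $\bigcup N[D_m]$ and hence outside $N(z)$; thus $N(z)\cap D\subseteq D_m$ is finite, contradicting the fact that in a $T_1$ space every neighborhood of an accumulation point meets $D$ in infinitely many points. With the claim in hand, Theorem~\ref{g1} delivers the required closed discrete $D^{*}$ with $\bigcup N[D^{*}]=X$.
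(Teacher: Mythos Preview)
The paper does not actually prove Theorem~\ref{g2}; it simply quotes it from Gruenhage's note \cite{gnote}, so there is no in-paper argument to compare against. That said, your proposal is sound and is essentially Gruenhage's own proof of \cite[Proposition~2.4]{gnote}: reduce via Theorem~\ref{g1} to finding, inside each nonempty closed $F$, a countable set $D=\bigcup_k D_k$ built by the finite ``witness'' recursion you describe, and then use $N$-closeness of the $B_n(y)$ together with the fact that points added after stage $m$ avoid $\bigcup N[D_m]$ to check both $N$-stickiness and closed discreteness. One small point worth making explicit: the final step (an open set meeting $D$ in only finitely many points cannot contain an accumulation point of $D$) uses that the ambient space is $T_1$, which is a standing assumption in \cite{gnote} and implicitly here.
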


 As an easy application of his method, Gruenhage proves in {\cite[Proposition 2.5]{gnote}} that spaces satisfying open (G) are D-spaces. The same proof yields the following.

\begin{prop} If the space $X$ satisfies condition $(G)$ then $X$ is a D-space.
\end{prop}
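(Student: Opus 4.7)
The plan is to apply Theorem \ref{g2}. Fix an ONA $N$ on $X$. For each $x \in X$, use condition $(G)$ with $U = N(x)$ to fix an open neighborhood $V_x$ of $x$ such that for every $y \in V_x$, there is $m \in \omega$ with $x \in W(m, y) \subseteq N(x)$. Define the relation $R$ on $X$ by $xRy$ iff $y \in V_x$.

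First, $R$ is nearly good: if $x \in \overline{A}$, then $V_x$ is an open neighborhood of $x$, so $V_x \cap A \neq \emptyset$, and any $y$ in this intersection witnesses $xRy$.

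The key step is verifying that for each $y \in X$, the set $R^{-1}(y) \setminus N(y)$ is a countable union of $N$-close sets. Set $C_m(y) = \{x \in X : x \in W(m, y) \text{ and } W(m, y) \subseteq N(x)\}$ for $m \in \omega$. By the defining property of $V_x$, if $xRy$ then $x \in C_m(y)$ for some $m$, so $R^{-1}(y) \subseteq \bigcup_m C_m(y)$. For each $m$, the set $C_m(y) \setminus N(y)$ is $N$-close: any $z$ in it satisfies $W(m, y) \subseteq N(z)$, hence $C_m(y) \setminus N(y) \subseteq W(m, y) \subseteq N(z)$. Since subsets of $N$-close sets are $N$-close, the sets $Z_m = (R^{-1}(y) \setminus N(y)) \cap C_m(y)$ are $N$-close, and $R^{-1}(y) \setminus N(y) = \bigcup_m Z_m$. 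Theorem \ref{g2} then produces a closed discrete $D \subseteq X$ with $\cup N[D] = X$.

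The main obstacle, or rather the key observation, is identifying the correct family $C_m(y)$: condition $(G)$ delivers, for each $y \in V_x$, not only $x \in W(m, y)$ but also $W(m, y) \subseteq N(x)$, and it is precisely the second containment (with $N(x)$ in place of a generic open set $U$) that produces the $N$-closeness. Note also that no openness of the sets $W(m, x)$ is used anywhere in the argument, which is why the same proof works for the general condition $(G)$ and not merely its open version. Once this is set up, the rest is routine bookkeeping and an appeal to Gruenhage's machinery.
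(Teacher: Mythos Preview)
Your proof is correct and follows essentially the same route as the paper: apply Theorem~\ref{g2} using the sets $C_m(y)=\{x: x\in W(m,y)\subseteq N(x)\}$, which are $N$-close because every $z\in C_m(y)$ satisfies $C_m(y)\subseteq W(m,y)\subseteq N(z)$. The only cosmetic difference is that the paper defines $xRy$ directly by ``$x\in W(n,y)\subseteq N(x)$ for some $n$'', which gives $R^{-1}(y)=\bigcup_n C_n(y)$ as an equality and avoids your extra intersection step; your relation $xRy\iff y\in V_x$ is contained in theirs, so you obtain only $R^{-1}(y)\subseteq\bigcup_m C_m(y)$ and must pass to the subsets $Z_m$, but this changes nothing of substance.
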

\begin{proof} Let $\mathcal{W}=\{\mathcal{W}(x):x\in X\}$, where $\mathcal{W}(x)=\{W(n,x):n\in\omega\}$, witness condition (G). We use the notation $V(x,U)$ from Definition \ref{condG} as well.

 Let $N$ be an ONA on $X$. We will apply Theorem \ref{g2} for the following relation $R$. Let $xRy$ iff $x\in W(n,y)\subseteq N(x)$ for some $n\in\omega$. Then $R$ is nearly good; indeed, let $x\in\overline{A}$ for some $A\subseteq X$. Then $V(x, N(x))\cap A\neq\emptyset$ and $xRy$ for any $y\in V(x, N(x))\cap A$.

 Let $y\in X$ and let $C_n=\{x\in X:x\in W(n,y)\subseteq N(x)\}$ for $n\in\omega$. Clearly $R^{-1}(y)=\cup\{C_n:n\in\omega\}$ and $C_n\subseteq W(n,y)$ is $N$-close. Thus by Theorem \ref{g2} there is some closed discrete $D\subseteq X$ such that $X=\cup N[D]$.
\end{proof}

\section{Well ordered $(\alpha A)$ spaces}\label{secA}

Our goal now is to prove that spaces satisfying well-ordered $(\alpha A)$ are D-spaces.

\begin{dfn}\label{alphaA}
Let $X$ be a space, $\alpha$ an ordinal. We say  that \emph{$X$ satisfies $(\alpha A)$} iff there is $\mathcal{W} = \{\mathcal{W}(x): x \in X \}$, where $\mathcal{W}(x)= \{W(\beta, x): \beta < \alpha \}$, such that $x \in W(\beta, x)\subseteq X$ with the following property. For every open $U$ containing $x$, there exists an open set $V(x, U)$ containing $x$ and an ordinal $\beta = \varphi(x, U) < \alpha$ such that
\begin{center}
     $x \in W(\beta, y) \subseteq U $ for all $y \in V(x, U)$.
\end{center}
If, in addition, $W(\beta, x) \subseteq W(\gamma, x)$ whenever $\gamma<\beta<\alpha$, then we say that \emph{$X$ satisfies well-ordered $(\alpha A)$}.
\end{dfn}

\begin{thm} \label{Theorem 2.2} If the space $X$ satisfies  well-ordered $(\alpha A)$ (for some ordinal $\alpha$) then $X$ is a $D$-space.
\end{thm}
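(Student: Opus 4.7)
The strategy is to apply Theorem \ref{g1} to the nearly good relation $R$ given by $xRy$ iff $x\in W(\beta,y)\subseteq N(x)$ for some $\beta<\alpha$; this is nearly good by the same argument as in the proposition for condition $(G)$ above. Theorem \ref{g2} cannot be used directly because when $\alpha$ is uncountable, $R^{-1}(y)$ decomposes as a union of $|\alpha|$-many, but in general not countably many, $N$-close sets. So given a closed non-empty $F\subseteq X$, I will construct a closed discrete $D\subseteq F$ satisfying the stronger property $F\subseteq\cup N[D]$, which makes the $N$-stickiness condition automatic.

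Fix the ONA $N$. For each $x\in X$ I write $\beta(x)=\varphi(x,N(x))<\alpha$ and $V^{*}(x)=V(x,N(x))$, so that $y\in V^{*}(x)$ implies $x\in W(\beta(x),y)\subseteq N(x)$. I build $D=\{d_{\xi}:\xi<\lambda\}$ by transfinite recursion: having chosen $\{d_{\eta}:\eta<\xi\}$, I set $F_{\xi}=F\setminus\bigcup_{\eta<\xi}N(d_{\eta})$, stop if $F_{\xi}=\emptyset$, and otherwise use the well-ordering of $\alpha$ to pick $d_{\xi}\in F_{\xi}$ minimising $\beta$ on $F_{\xi}$. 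The crucial feature of this choice is that the ordinals $\beta_{\xi}:=\beta(d_{\xi})$ form a non-decreasing sequence: the minimum of $\beta$ can only grow as $F_{\xi}$ shrinks. The recursion terminates since the $d_{\xi}$'s are distinct, and at termination $F\subseteq\cup N[D]$.

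The heart of the argument is showing that $D$ is closed discrete. For discreteness I verify $V^{*}(d_{\xi})\cap D=\{d_{\xi}\}$: if $d_{\eta}\in V^{*}(d_{\xi})$ with $\eta<\xi$, then $d_{\xi}\in W(\beta_{\xi},d_{\eta})\subseteq W(\beta_{\eta},d_{\eta})\subseteq N(d_{\eta})$, using both $\beta_{\eta}\leq\beta_{\xi}$ and the nesting $W(\beta,y)\subseteq W(\gamma,y)$ for $\gamma<\beta$; this contradicts $d_{\xi}\in F_{\xi}$. If instead $\eta>\xi$, then $d_{\eta}\in W(\beta_{\xi},d_{\eta})\subseteq N(d_{\xi})$, contradicting $d_{\eta}\in F_{\eta}$. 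For closedness I suppose $x\in\overline{D}\setminus D$ and let $\xi(x)$ be the least $\xi$ with $x\in N(d_{\xi})$, so $x\in F_{\xi(x)}$ and hence $\beta(x)\geq\beta_{\xi(x)}$. The open neighbourhood $U=V^{*}(x)\cap N(d_{\xi(x)})$ of $x$ meets $D$ at some $d_{\eta}$, and a parallel case analysis forces $\eta=\xi(x)$: $\eta>\xi(x)$ gives $d_{\eta}\in N(d_{\xi(x)})$, contradicting $d_{\eta}\in F_{\eta}$, while $\eta<\xi(x)$ gives $x\in N(d_{\eta})$ by the same nesting, contradicting minimality of $\xi(x)$. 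Hence $U\cap D\subseteq\{d_{\xi(x)}\}$, which in a $T_{1}$ space is incompatible with $x$ being a limit of $D$. The main obstacle, once the relation and recursion are set up, is to spot that the single monotonicity of $\beta_{\xi}$ drives both the discreteness and closedness arguments in tandem.
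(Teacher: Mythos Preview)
Your proof is correct, but it takes a longer route than the paper's. Both proofs exploit the same core observation: pick a point minimising $\varphi(\cdot,N(\cdot))$ on the closed set $F$, and use the nesting $W(\beta,y)\subseteq W(\gamma,y)$ for $\gamma<\beta$. The paper, however, stops right there: it takes $D=\{y\}$ for a single minimiser $y$, and checks directly that this singleton is already $N$-sticky mod $R$ on $F$ (with $R$ defined by $xRy$ iff $x\in W(\varphi(x,N(x)),y)$), since $xRy$ and $\varphi(x,N(x))\geq\varphi(y,N(y))$ give $x\in W(\varphi(x,N(x)),y)\subseteq W(\varphi(y,N(y)),y)\subseteq N(y)$. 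Theorem~\ref{g1} then does all the remaining work. You instead run a full transfinite recursion to build a closed discrete $D$ with $F\subseteq\cup N[D]$; this is strictly stronger than $N$-stickiness and in fact makes both the relation $R$ and Theorem~\ref{g1} unnecessary---applying your construction to $F=X$ already yields the $D$-space conclusion directly. So what you have is really a self-contained direct argument (essentially unpacking Gruenhage's recursion by hand), whereas the paper's proof is a two-line application of his machinery. Your approach buys independence from Theorem~\ref{g1}; the paper's buys brevity.
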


  \begin{proof}Let $\mathcal{W}=\{\mathcal{W}(x):x\in X\}$, where $\mathcal{W}(x)=\{W(\beta,x):\beta<\alpha\}$, witness condition $(\alpha A)$. We use the notation $V(x,U)$ and $\varphi(x, U)$ from Definition \ref{alphaA} as well.

   Let $N$ be a neighborhood assignment on $X$. We will define a relation $R$ on $X$ and apply Theorem \ref{g1}. Let $xRy$ iff $x\in W(\beta,y)$ for $\beta=\varphi(x,N(x))$. Clearly $R$ is nearly good; indeed, let $x\in\overline{A}$ for some $A\subseteq X$. Then $V(x,N(x))\cap A\neq\emptyset$ and $xRy$ for any $y\in V(x,N(x))\cap A$.

  Suppose that $F\subseteq X$ is closed and non-empty. We show that there is a closed discrete $D\subseteq F$ such that $D$ is N-sticky mod R on F. Let $\beta_0=\min\{\varphi(y,N(y)):y\in F\}$ and pick $y\in F$ such that  $\beta_0=\varphi(y,N(y))$. Let $D=\{y\}$. Suppose that $xRy$ for some $x\in F$. Then for $\beta=\varphi(x,N(x))$ the following holds $$x\in W(\beta,y)\subseteq W(\beta_0,y)\subseteq N(y)$$
since $\beta\geq\beta_0$. Thus $D$ is $N$-sticky mod $R$ on $F$, and so by Theorem \ref{g1} there is some closed discrete $D^*\subseteq X$ such that $X=\cup N[D^*]$.
\end{proof}

Now we formulate some corollaries. It is proved in \cite{s8} that (semi-)stratifiable spaces are D-spaces. We can slightly strengthen this result.

\begin{dfn}[{\cite[Definition 2.2]{vaugh}}] Let $(X,\tau)$ be a $T_1$ topological space and $\alpha\geq\omega$ an ordinal. $X$ is said to be \emph{stratifiable over $\alpha$} or \emph{linearly stratifiable} iff there exists a mapping $G:\alpha\times \tau\rightarrow \tau$ with the following properties (write $U_\beta=G(\beta, U)$).
\begin{enumerate}
  \item $\overline{U_\beta}\subseteq U$ for all $\beta<\alpha$ and $U\in\tau$,
  \item $\bigcup\{U_\beta:\beta<\alpha\}=U$ for all $U\in\tau$,
  \item if $U\subseteq V$ then $U_\beta\subseteq V_\beta$ for all $\beta<\alpha$,
  \item if $\gamma<\beta<\alpha$ then $U_\gamma\subseteq U_\gamma$ for all $U\in\tau$.
\end{enumerate}
\end{dfn}

From {\cite[Theorem 5.2]{s5}} we know that linearly stratifiable spaces are well-ordered $(\alpha A)$, thus we have the following.

\begin{cor}\label{corlinstrat} Linearly stratifiable spaces are $D$-spaces.
\end{cor}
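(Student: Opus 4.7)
The plan is that this corollary is a one-line consequence of the two facts already in hand, so the proof is essentially a citation chase. By \cite[Theorem 5.2]{s5}, every linearly stratifiable space satisfies well-ordered $(\alpha A)$ for the same ordinal $\alpha$ over which it is stratifiable. Once we have well-ordered $(\alpha A)$, Theorem \ref{Theorem 2.2} (just proved) immediately yields that the space is a $D$-space.

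So the structure I would write is: let $X$ be linearly stratifiable, say over an ordinal $\alpha \geq \omega$, with structuring map $G : \alpha \times \tau \to \tau$. Invoke \cite[Theorem 5.2]{s5} to conclude that $X$ satisfies well-ordered $(\alpha A)$ (possibly after passing to a slightly different ordinal, but that is harmless for our purposes). Then apply Theorem \ref{Theorem 2.2} to get a closed discrete $D^* \subseteq X$ with $\bigcup N[D^*] = X$ for every ONA $N$.

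The only mild obstacle is a bookkeeping one: making sure the notion of \emph{stratifiable over $\alpha$} as given in Definition~\cite[Definition 2.2]{vaugh} above matches the hypothesis of \cite[Theorem 5.2]{s5}, and that the monotonicity clause in clause~(4) aligns with the reverse-inclusion well-ordering demanded in Definition \ref{alphaA}. Both are definitional matters and no new argument is required. I would not reprove \cite[Theorem 5.2]{s5} here; the whole point of having stated Theorem \ref{Theorem 2.2} in terms of well-ordered $(\alpha A)$ is to make the corollary immediate.
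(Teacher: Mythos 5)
Your proposal is correct and matches the paper exactly: the paper likewise cites \cite[Theorem 5.2]{s5} to conclude that linearly stratifiable spaces satisfy well-ordered $(\alpha A)$ and then applies Theorem \ref{Theorem 2.2}. No further argument is needed.
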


For a space $(X,\tau)$ let $\mathcal{D}_X=\{(x,U): x\in U\in \tau\}$.

\begin{dfn}A space $(X,\tau)$ is said to be \emph{Borges normal} iff there are operators $H:\mathcal{D}_X\rightarrow \tau$ and $n:\mathcal{D}_X\rightarrow \omega$ such that $H(x, U) \cap H(y, V) \neq \emptyset$ and $n(x, U) \leq n(y, V)$ implies $y \in U$ for all $(x,U),(y,V)\in \mathcal{D}_X$.
\end{dfn}

It can be proved that Borges normal spaces are special well-ordered $(\alpha A)$ spaces.

\begin{thm}[{\cite[Theorem 2.1]{s6}}] A space $X$ is Borges normal iff $X$ satisfies well-ordered $(\omega A)$.
\end{thm}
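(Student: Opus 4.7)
The plan is to establish both directions of the biconditional by translating the witnessing data of the two conditions into each other; essentially no real topology is needed beyond checking definitions.

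For the direction \emph{well-ordered $(\omega A) \Rightarrow$ Borges normal}, let $\mathcal{W}$, $V(\cdot,\cdot)$, and $\varphi$ witness well-ordered $(\omega A)$ as in Definition \ref{alphaA}. I would simply set $H(x,U):=V(x,U)$ and $n(x,U):=\varphi(x,U)$, and verify the Borges condition directly. Suppose $H(x,U) \cap H(y,V) \neq \emptyset$ and $n(x,U) \leq n(y,V)$, and pick $z$ in the intersection. Then $z \in V(x,U)$ gives $x \in W(\varphi(x,U),z) \subseteq U$, while $z \in V(y,V)$ gives $y \in W(\varphi(y,V),z) \subseteq V$. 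Since $\mathcal{W}(z)$ is well-ordered by reverse inclusion and $\varphi(x,U) \leq \varphi(y,V)$, we get $W(\varphi(y,V),z) \subseteq W(\varphi(x,U),z) \subseteq U$, hence $y \in U$ as required.

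For the converse, \emph{Borges normal $\Rightarrow$ well-ordered $(\omega A)$}, I would first replace $H(x,U)$ by $H(x,U) \cap U$; this preserves the Borges property and, together with the standard convention $x \in H(x,U)$, guarantees $x \in H(x,U) \subseteq U$. Then define
\[
W(m,y) := \bigcap \{\, U \in \tau : \exists\, x \in X \text{ with } y \in H(x,U) \text{ and } n(x,U) \leq m\,\}.
\]
The family $\{W(m,y):m\in\omega\}$ is decreasing in $m$, and $y \in W(m,y)$ since any $U$ in the defining family contains $H(x,U) \ni y$. Put $V(x,U) := H(x,U)$ and $\varphi(x,U) := n(x,U)$. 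For $y \in V(x,U)$, the pair $(x,U)$ itself contributes to the intersection defining $W(\varphi(x,U),y)$, so $W(\varphi(x,U),y) \subseteq U$. Moreover $x \in W(\varphi(x,U),y)$ because any competing pair $(x',U')$ in that intersection satisfies $y \in H(x',U') \cap H(x,U) \neq \emptyset$ and $n(x',U') \leq n(x,U)$, so the Borges condition forces $x \in U'$.

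The only technical wrinkle I anticipate is the mismatch between the definition as literally stated, which does not explicitly assert $x \in H(x,U) \subseteq U$, and the converse direction, which needs these properties to make $y \in W(m,y)$ work. This is dispensed with by the intersection trick above together with the natural anchoring convention on $H$; beyond that, the argument is just a direct dictionary between the two systems of witnessing data.
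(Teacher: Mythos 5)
Your argument is correct. Note that the paper itself gives no proof of this statement---it is quoted from Stares \cite{s6}---and your two-way dictionary ($H\leftrightarrow V$, $n\leftrightarrow\varphi$ in one direction; in the other, $W(m,y)$ defined as the intersection of all $U$ admitting a witness $(x,U)$ with $y\in H(x,U)$, $n(x,U)\le m$, with the Borges implication giving $x\in W(n(x,U),y)$ and the monotonicity of the family in $m$ giving the well-ordering) is essentially the standard translation argument. You were also right to flag and repair the defect in the paper's statement of Borges normality: as literally written, $H\equiv\emptyset$ would satisfy it vacuously, so one must assume $x\in H(x,U)$ (as in Stares' original definition) and may then intersect with $U$ to get $x\in H(x,U)\subseteq U$, exactly as you do.
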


\begin{cor} Borges normal spaces are $D$-spaces.
\end{cor}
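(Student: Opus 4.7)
The plan is to combine two results already in hand: the characterization theorem stating that Borges normality is equivalent to well-ordered $(\omega A)$, and Theorem \ref{Theorem 2.2} which guarantees that any space satisfying well-ordered $(\alpha A)$ for some ordinal $\alpha$ is a $D$-space. So no new topological argument is needed; the corollary is purely a matter of chaining these implications together.

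Concretely, I would start from an arbitrary Borges normal space $X$. By the theorem cited from \cite{s6}, $X$ satisfies well-ordered $(\omega A)$. Since $\omega$ is an ordinal, this is a particular case of the hypothesis of Theorem \ref{Theorem 2.2}, with $\alpha=\omega$. Applying that theorem directly yields that $X$ is a $D$-space.

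There is no real obstacle to overcome; the content has been packaged into the preceding results. The only thing worth being careful about is making sure one invokes Theorem \ref{Theorem 2.2} with the correct value of $\alpha$ and does not try to reproduce Gruenhage's sticky-relation argument, which is already encapsulated in that theorem. Thus the proof reduces to a two-line citation.
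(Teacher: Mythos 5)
Your proposal is correct and is exactly the argument the paper intends: the corollary follows immediately by combining the cited equivalence of Borges normality with well-ordered $(\omega A)$ and Theorem \ref{Theorem 2.2} applied with $\alpha=\omega$. No further work is needed.
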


\section{Linearly semi-stratifiable spaces}\label{secstrat}

In \cite{s8}, Borges and Wehrly proved that semi-stratifiable spaces are D-spaces. We find a common generalization of this and Corollary \ref{corlinstrat}, that is, we show that linearly semi-stratifiable spaces are D-spaces.

 Let $(X, \tau)$ be a $T_{1}$-space and let $\mathcal{F}_X$ denote the family of all closed subsets of $X$.

\begin{dfn} $X$ is said to be \emph{semi-stratifiable over $\alpha$} (for some ordinal $\alpha$) or \emph{linearly semi-stratifiable} if there exists a mapping $F: \alpha \times \tau \rightarrow \mathcal{F}_X$ such that:
\begin{enumerate}[(1)]
\item $U = \cup \{F(U, \beta): \beta < \alpha \}$ for all $U \in \tau$;

\item if $U \subseteq W$ then $F(U, \beta) \subseteq F(W, \beta)$ for all $\beta < \alpha$;

\item if $\gamma < \beta < \alpha$, then $F(U, \gamma) \subseteq F(U, \beta)$ for all $U \in \tau$.
\end{enumerate}
\end{dfn}

\begin{thm} If the space $X$ is semi-stratifiable over $\alpha$ (for some ordinal $\alpha$) then $X$ is a $D$-space.
\end{thm}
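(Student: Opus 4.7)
The plan is to adapt the sticky-relation argument of Theorem \ref{Theorem 2.2} to this setting, invoking Gruenhage's Theorem \ref{g1}. Given an ONA $N$ on $X$, for each $x\in X$ set
\[
\beta(x)=\min\{\beta<\alpha : x\in F(N(x),\beta)\},
\]
which exists by property (1) of the linearly semi-stratifiable definition together with the well-ordering of the ordinals. This $\beta(x)$ plays the role of $\varphi(x,N(x))$ from the $(\alpha A)$ argument. Define a relation $R$ on $X$ by declaring $xRy$ iff $x\in F(N(y),\beta(x))$; since $F(N(y),\beta)\subseteq N(y)$, the relation $xRy$ immediately forces $x\in N(y)$, which drives the stickiness.

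For the sticky-set hypothesis of Theorem \ref{g1}, take a nonempty closed $F\subseteq X$, let $\beta_F=\min\{\beta(y):y\in F\}$, pick $y_0\in F$ realizing this minimum, and put $D=\{y_0\}$. Trivially $D$ is closed discrete in $F$, and for any $x\in F$ with $xRy_0$ we have $x\in F(N(y_0),\beta(x))\subseteq N(y_0)=\cup N[D]$, so $D$ is $N$-sticky mod $R$ on $F$. (As in the $(\alpha A)$ proof, choosing $y_0$ to realize the minimum mirrors the structure, though here the inclusion $F(N(y_0),\beta(x))\subseteq N(y_0)$ already gives stickiness directly, without needing the monotonicity of $F$ in its ordinal argument.)

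The hard part will be verifying that $R$ is nearly good, equivalently that $R(x)=\{y:x\in F(N(y),\beta(x))\}$ is a neighborhood of $x$ for every $x$. Whereas the $(\alpha A)$ definition furnishes a local set $V(x,N(x))$ for free, linear semi-stratifiability only supplies increasing chains of closed sets $F(U,\beta)$ exhausting open $U$, with no built-in local control at $x$. Monotonicity property (2) does give the pointwise inclusion $R(x)\supseteq\{y:N(y)\supseteq N(x)\}$---since $N(y)\supseteq N(x)$ implies $x\in F(N(x),\beta(x))\subseteq F(N(y),\beta(x))$---and the plan is to leverage this, possibly after a mild refinement of $R$ (for instance, intersecting with $N^{-1}(x)$ or imposing a coordinated ordinal condition on both $x$ and $y$), to produce an open set around $x$ witnessing $x\in\operatorname{int}(R(x))$. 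Once nearly goodness is established, Theorem \ref{g1} yields a closed discrete $D^*\subseteq X$ with $\cup N[D^*]=X$, proving that $X$ is a $D$-space.
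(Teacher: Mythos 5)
There is a genuine gap, and it is exactly at the point you flag as ``the hard part'': nearly goodness is never established, and in fact the relation you propose is not nearly good. With $xRy$ iff $x\in F(N(y),\beta(x))$, relatedness forces $x\in N(y)$; but when $x\in\overline{A}$ there is no reason any $y\in A$ should satisfy $x\in N(y)$, let alone lie in the closed piece $F(N(y),\beta(x))$. Concretely, in $\mathbb{R}$ (metrizable, hence semi-stratifiable over $\omega$) take $x=0$, $A=\{1/n:n\geq 1\}$ and $N(1/n)=(1/(2n),2/n)$: then $0\in\overline{A}$ but $0\notin N(y)$ for every $y\in A$, so no $y\in A$ is $R$-related to $0$. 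Your proposed repairs also point the wrong way: intersecting $R$ with anything only shrinks it, making nearly goodness harder, and the reformulation ``nearly good iff $R(x)$ is a neighborhood of $x$'' is not an equivalence --- it is a strictly stronger demand, essentially asking for the $(\alpha A)$-style local witness $V(x,U)$ that semi-stratifiability does not provide. That is precisely why this route stalls.

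The missing idea in the paper's proof is to \emph{enlarge} the relation by an ordinal clause: with $\sigma(x)=\min\{\beta<\alpha:x\in F(N(x),\beta)\}$ (your $\beta(x)$), set $xRy$ iff $x\in N(y)$ \emph{or} $\sigma(x)<\sigma(y)$. Nearly goodness then follows by a global closedness argument rather than local control: if $x\in\overline{A}$ and no $y\in A$ is related to $x$, then $x\notin N(y)$ and $\sigma(y)\leq\sigma(x)$ for all $y\in A$, so by monotonicity in both arguments $A\subseteq F\bigl(\cup\{N(y):y\in A\},\sigma(x)\bigr)$, a closed set contained in $\cup\{N(y):y\in A\}\subseteq X\setminus\{x\}$, contradicting $x\in\overline{A}$. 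Stickiness still works with the larger relation: choosing $y_0\in F$ with $\sigma(y_0)$ minimal on the closed set $F$ rules out the disjunct $\sigma(x)<\sigma(y_0)$ for $x\in F$, so $xRy_0$ forces $x\in N(y_0)$. So your choice of $\sigma$ and of the sticky singleton matches the paper, but without the extra disjunct in $R$ the argument does not go through.
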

 \begin{proof} Let $F:\alpha \times \tau \rightarrow \mathcal{F}_X$ be the function witnessing that $X$ is linearly semi-stratifiable.

  Let $N$ be ONA on $X$. We will define a relation $R$ on $X$ and apply Theorem \ref{g1}. Let $\sigma(x)=\min\{\beta<\alpha:x\in F(N(x),\beta)\}$ for $x\in X$. Let $xRy$ iff $x\in N(y)$ or $\sigma(x)<\sigma(y)$. We prove that $R$ is nearly good. Suppose that $x\in\overline{A}$ however $x\notin R^{-1}(y)$ for all $y\in A$. Thus $x\notin \cup\{N(y):y\in A\}$ and $\sigma(y)\leq\sigma(x)$ for all $y\in A$. Thus $y\in F(N(y),\sigma(y))\subseteq F(N(y),\sigma(x))$ for all $y\in A$. Thus $$A\subseteq F(\cup\{N(y):y\in A\},\sigma(x))\subseteq \cup\{N(y):y\in A\} \subseteq X\setminus\{x\}.$$ $F(\cup\{N(y):y\in A\},\sigma(x))$ is closed hence $x\in \overline{A}\subseteq F(\cup\{N(y):y\in A\},\sigma(x))$, which is a contradiction. This proves that $R$ is nearly good.

Suppose that $F\subseteq X$ is closed and nonempty. We show that there is a closed discrete $D\subseteq F$ such that $D$ is N-sticky mod R on F. Let $\sigma=\min\{\sigma(y):y\in F\}$ and let $y\in F$ such that $\sigma=\sigma(y)$. Let $D=\{y\}$. If $xRy$ for some $x\in F$ then $x\in N(y)$ since $\sigma(x)\geq\sigma(y)$. Thus $D$ is $N$-sticky mod $R$ on $F$, and so by Theorem \ref{g1} there is some closed discrete $D^*\subseteq X$ such that $X=\cup N[D^*]$.
\end{proof}

\section{Elastic spaces}\label{secelastic}

Our aim now is to prove that \emph{elastic spaces} are D-spaces. Elastic spaces were first introduced by H. Tamano and J. E. Vaughan in \cite{s16} as a natural generalization of stratifiable spaces. First we need the definition of a \emph{pair-base} which is due to J. G. Ceder \cite{s15}.

\begin{dfn}Let $X$ be a space. A collection $\mathcal{P}$ of ordered pairs $P = (P_{1}, P_{2})$ of subsets of  $X$ is called a \emph{pair-base} provided that $P_{1}$ is open for all $P \in \mathcal{P}$ and that for every $x \in X$ and open set $U$ containing $x$, there is a $P \in \mathcal{P}$ such that $x \in P_{1} \subseteq P_{2} \subseteq U$.
\end{dfn}

 The following definition of elastic spaces is an improvement of the original one and due to Gartside and Moody \cite{s4}.

\begin{dfn} A space $X$ is \emph{elastic} if there is a pair-base $\mathcal{P}$ on $X$
and transitive relation $\leq$ on $\mathcal{P}$ such that
\begin{enumerate}[(1)]
 \item if $P, P^{\prime} \in \mathcal{P}$ are such that $P_{1} \cap P_{1}^{\prime} \neq \emptyset$ then $P \leq P^{\prime}$ or $P^{\prime} \leq P$;

 \item if $P \in \mathcal{P}$ and $\mathcal{P}^{\prime} \subseteq \{P^{\prime} \in \mathcal{P}: P^{\prime} \leq P\}$ then $\overline{\cup \{P_{1}^{\prime}: P^{\prime} \in \mathcal{P}'\}} \subseteq \cup \{P_{2}^{\prime}: P^{\prime} \in \mathcal{P}'\}$.
\end{enumerate}
\end{dfn}

Note that the relation $\leq$ should be reflexive.

Before we show that elastic spaces are D-spaces, we need the following proposition which is implicitly in {\cite[Lemma 2]{s16}}.

\begin{prop}\label{2.11} Suppose that $\rr$ is a reflexive, transitive relation on the set $S$, then there is a reflexive, antisymmetric relation $\rrr$ on $S$ such that:
\begin{enumerate}[(1)]
\item if $x, y \in S$ and $x \rr y$, then $x \rrr y$ or $y \rrr x$;

\item if $A$ is a non-empty subset of $S$, then A has a $\rrr$-minimal element, i.e. there is an $x \in A$ such that $y \not\rrr x$ whenever $y \in A \setminus \{x\}$;

\item if $A \subseteq S$ and $A \subseteq \{x \in S: x \rrr s \}$ for some $s \in S$,
then $A \subseteq \{x \in S: x \rr s'\}$ for some $s'\in S$.
\end{enumerate}
\end{prop}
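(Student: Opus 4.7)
My plan is to define $\preceq$ from a carefully chosen well-ordering of $S$. Using the axiom of choice, fix a well-ordering $\prec_W$ of $S$ (to be constructed below) and declare
\[
 x \preceq y \iff x = y, \text{ or } \bigl( \{x, y\} \text{ is } \leq\text{-comparable and } x \prec_W y \bigr).
\]
This relation is visibly reflexive and antisymmetric. Condition (1) is immediate from the definition, and condition (2) holds because the $\prec_W$-least element of any nonempty $A \subseteq S$ must be $\preceq$-minimal in $A$ (no other element of $A$ can $\prec_W$-precede it, so none can $\preceq$-precede it).

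The substantive content is in condition (3), which demands that $\{x \in S : x \preceq s\}$ admit a common $\leq$-upper bound in $S$ for every $s$. Observe that this set is $\{s\}$ together with those $\leq$-comparable elements $x \prec_W s$; the ``$x \leq s$'' part is always bounded by $s$, but the ``$s \leq x$, $x \neq s$'' part requires care. An arbitrary $\prec_W$ can fail (3) -- if unboundedly many $\leq$-successors of $s$ precede $s$ in the well-ordering -- so I would build $\prec_W$ itself by transfinite recursion, maintaining at stage $\xi$ the invariant that $\{s_\eta : \eta \leq \xi,\ s_\eta \text{ is } \leq\text{-comparable to } s_\xi\}$ is $\leq$-bounded in $S$. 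The point is that $\leq$-incomparable elements contribute nothing to any $\preceq$-downset and can be appended freely, while within a $\leq$-chain one enumerates along an AC-provided cofinal well-ordered sub-chain, exploiting the transitivity of $\leq$ to propagate upper bounds forward along the chain.

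The main obstacle is verifying that this recursion can always be extended at successor and limit stages while still exhausting $S$: one must show that at every stage an element is available which preserves the boundedness invariant. This requires a careful decomposition of $(S, \leq)$ along the $\leq$-comparability relation and a double recursion (one enumerating the comparability classes, another enumerating along cofinal chains inside each). A cleaner alternative is to apply Zorn's Lemma to the poset of partial constructions $(T, \preceq_T)$ with $T \subseteq S$ satisfying (1)-(3) internally, ordered by initial-segment extension; a maximal such partial construction must satisfy $T = S$, since any missing element can be appended on top of a suitably chosen comparability cluster without violating any of the invariants.
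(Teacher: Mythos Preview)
Your reduction is sound: defining $x \preceq y$ as ``$x=y$, or $\{x,y\}$ is $\leq$-comparable and $x \prec_W y$'' for a well-ordering $\prec_W$ automatically gives reflexivity, antisymmetry, (1) and (2), and you correctly isolate the construction of $\prec_W$ satisfying (3) as the entire content of the proof.

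The gap is that neither of your two suggested constructions works. Take $S=\{a_n:n\in\omega\}\cup\{b\}$ with $a_n\leq a_m$ iff $n\leq m$ and $b\leq a_n$ for every $n$ (a bottom element under an unbounded chain). For your Zorn approach, the partial structure $T=\{a_n:n\in\omega\}$ with $\preceq_T={\leq}\restriction T$ satisfies (1)--(3) and is \emph{maximal} under initial-segment extension: the only missing element is $b$, it is $\leq$-comparable to every $a_n$, so (1) together with the initial-segment requirement forces $a_n\preceq b$ for all $n$, and then $\{x:x\preceq b\}\supseteq T$ has no $\leq$-upper bound, violating (3). Thus the Zorn poset has dead ends with $T\neq S$. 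Your first approach is too vague to pin down, but note that $\leq$-comparability is not an equivalence relation, so ``comparability classes'' are not well defined; and in the example above everything is pairwise comparable, yet enumerating along a cofinal chain (the $a_n$) first leaves $b$ on top with an unbounded $\preceq$-down-set.

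The idea you are missing is the one the paper uses: fix an \emph{arbitrary} enumeration $S=\{s_\alpha:\alpha<\kappa\}$, set $S(\alpha)=\{s\in S:s\leq s_\alpha\}$ and $S'(\alpha)=S(\alpha)\setminus\bigcup_{\beta<\alpha}S(\beta)$, and build $\prec_W$ by concatenating well-orderings of the successive $S'(\alpha)$. If $\alpha_0$ is least with $s\in S(\alpha_0)$ (equivalently $s\in S'(\alpha_0)$) and $x\prec_W s$ is $\leq$-comparable to $s$, then transitivity of $\leq$ forces $x\in S(\alpha_0)$: either $x\leq s\leq s_{\alpha_0}$, or $s\leq x\leq s_\beta$ for the $\beta<\alpha_0$ with $x\in S'(\beta)$, contradicting minimality of $\alpha_0$. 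Hence $s_{\alpha_0}$ is the required $\leq$-upper bound, giving (3). In the example above this mechanism automatically places $b$ first, regardless of the initial enumeration. (The paper actually defines $\preceq$ directly rather than through a $\prec_W$, and its $\preceq$ also relates some $\leq$-incomparable pairs; but the underlying well-ordering it produces makes your definition work as well.)
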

\begin{proof} Let $S=\{s_\alpha:\alpha<\kappa\}$ and $S(\alpha)=\{s\in S: s\rr s_\alpha\}$ for $\alpha<\kappa$. By induction on $\alpha<\kappa$ we define a reflexive and antisymmetric relation $\rrr_\alpha$ on $\cup\{S(\beta):\beta\leq \alpha\}$ such that $\rrr_\alpha$ extends $\rrr_\beta$ for $\beta<\alpha<\kappa$ and then let $\rrr$ to  be $\cup\{\rrr_\alpha:\alpha<\kappa\}$.

 Let $\rrr_0$ denote a well-ordering of $S(0)$. Let $\alpha<\kappa$ and suppose that $\rrr_\beta$ is constructed for $\beta<\alpha$. Let $S'(\alpha)=S(\alpha)\setminus \cup\{S(\beta:\beta<\alpha)\}$. Let $\rr_\alpha$ be a well-ordering on $S'(\alpha)$ and also put $s\rr_\alpha s'$ if $s'\in S'(\alpha)$ and $s\in S(\alpha)\cap(\cup\{S(\beta):\beta<\alpha\})$. Let $\rrr_\alpha=\cup\{\rrr_\beta:\beta<\alpha\}\cup\rr_\alpha$; this is reflexive and antisymmetric. Finally, let $\rrr$ to be $\cup\{\rrr_\alpha:\alpha<\kappa\}$.

Clearly $\rrr$ is reflexive and antisymmetric on $S$. First we shall verify (1). Let us suppose that $x\rr y$ for some $x,y\in S$. Let $\alpha_0=\min\{\alpha<\kappa:y\in S(\alpha)\}$, since $\rr$ is transitive we have $x\in S(\alpha_0)$. Then by definition $x\rr_\alpha y$ or $y\rr_\alpha x$ thus $x\rrr y$ or $y\rrr x$.

Next we show that every nonempty $A\subseteq S$ has a $\rrr$-minimal element. First note the following.

\begin{clm} If $s,s'\in S$, $s\rrr s'$ and $\alpha<\kappa$ is minimal such that $s,s'\in \cup\{S(\beta):\beta\leq \alpha\}$ then $s\rrr_\alpha s'$, $s'\in S'(\alpha)$ and $s\in S(\alpha)$.
\end{clm}
\begin{proof} Let $\gamma<\kappa$ minimal such that $s\rrr_\gamma s'$. Thus $s,s'\in \cup\{S(\beta):\beta\leq \gamma\}$ hence $\alpha\leq \gamma$. If $\alpha<\gamma$ then $s,s'\notin S'(\gamma)$ so $s$ and $s'$ are not related by $\rr_\gamma$. Hence there is some $\beta<\gamma$ such that $s\rrr_\beta s'$ (by the definition of $\rrr_\gamma$). This contradicts the choice of $\gamma$. Thus $\alpha=\gamma$ and $s\rrr_\alpha s'$. Clearly $s\rr_\alpha s'$ by the definition of $\rrr_\alpha$ since $s$ and $s'$ are not related by $\rrr_\delta$ for any $\delta<\alpha=\gamma$ . Thus $s'\in S'(\alpha)$ and $s\in S(\alpha)$.
\end{proof}

Suppose that $\emptyset \neq A\subseteq S$. Let $\alpha_0=\min\{\alpha<\kappa:A\cap S(\alpha)\neq\emptyset\}$. Then $A\cap S(\alpha_0)\subseteq S'(\alpha_0)$. Since $S'(\alpha_0)$ is well-ordered by $\rr_{\alpha_0}$, there is an $x\in A\cap S(\alpha_0)$ which is $\rr_{\alpha_0}$-minimal in $A\cap S(\alpha_0)$. We show that $x$ is $\rrr$-minimal in $A$. Clearly $x$ is $\rrr$-minimal in $A\cap S(\alpha_0)$. If $y\rrr x$ for some $y\in A$ then for the minimal $\alpha<\kappa$ such that $x,y\in \cup\{S(\beta):\beta\leq \alpha\}$ we have $\alpha_0<\alpha$. By the claim $x\in S'(\alpha)$ which is a contradiction. Thus $x$ is $\rrr$-minimal in $A$, i.e. (2) holds.

Finally we show that if $A$ is $\rrr$ upper bounded then also $\rr$ upper bounded. Suppose that $A \subseteq \{x \in S: x \rrr s \}$ for some $s \in S$. Let $\alpha_0=\min\{\alpha<\kappa:s\in S(\alpha)\}$. We shall show that $A \subseteq S(\alpha_0)$, that is, $s_{\alpha_0}$ is a $\rr$ upper bound for $A$. Clearly $s\in S'(\alpha_0)$. Let $x\in A$ and let $\alpha$ be minimal such that $x,s\in \cup\{S(\beta):\beta\leq \alpha\}$. Then $s\in S'(\alpha)$ by the claim and $x\rrr s$. Hence $\alpha=\alpha_0$ and $x\in S(\alpha_0)$, using the claim again. This proves $A\subseteq S(\alpha_0)$.
\end{proof}

\begin{thm} If $X$ is elastic then $X$ is a D-space.
\end{thm}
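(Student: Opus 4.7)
The plan is to apply Theorem \ref{g1}. Fix an ONA $N$ on $X$ and, using the pair-base property, pick for each $x \in X$ a pair $P(x) \in \mathcal{P}$ with $x \in P(x)_{1} \subseteq P(x)_{2} \subseteq N(x)$. The relation $\rr$ on $\mathcal{P}$ coming from the elastic structure is reflexive and transitive, so Proposition \ref{2.11} applies; let $\rrr$ denote the antisymmetric refinement it produces. The intuition is that $\rr$ alone is too coarse to locate ``minimal'' elements in closed sets, whereas $\rrr$ has actual minima (by \ref{2.11}(2)) while still controlling $\rr$ via \ref{2.11}(3).

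The relation I would use is $xRy$ iff $P(x) \rrr P(y)$ or $x \in N(y)$. Once this is shown to be nearly good, the sticky step is easy: given a closed nonempty $F \subseteq X$, use \ref{2.11}(2) to pick $y_{0} \in F$ so that $P(y_{0})$ is $\rrr$-minimal in $\{P(y) : y \in F\}$, and set $D = \{y_{0}\}$. For any $x \in F$ with $xRy_{0}$, either $x \in N(y_{0})$ directly, or $P(x) \rrr P(y_{0})$, which by minimality forces $P(x) = P(y_{0})$ and hence $x \in P(x)_{1} = P(y_{0})_{1} \subseteq N(y_{0})$. Thus $D$ is $N$-sticky mod $R$ on $F$, and Theorem \ref{g1} concludes the argument.

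The main obstacle is the near-goodness of $R$, and this is where both elasticity axioms and \ref{2.11}(3) must cooperate. Given $x \in \overline{A}$, I would first shrink to $A' = P(x)_{1} \cap A$, which is still dense at $x$ since $P(x)_{1}$ is an open neighborhood of $x$. For every $y \in A'$ the common point $y$ lies in $P(x)_{1} \cap P(y)_{1}$, so elastic axiom (1) makes $P(x)$ and $P(y)$ $\rr$-comparable, hence $\rrr$-comparable by \ref{2.11}(1). If some $y \in A'$ witnesses $P(x) \rrr P(y)$, we are done. Otherwise $\{P(y) : y \in A'\} \subseteq \{P' : P' \rrr P(x)\}$, and \ref{2.11}(3) yields a single $P'' \in \mathcal{P}$ with $P(y) \rr P''$ for every $y \in A'$. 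Elastic axiom (2) then gives $\overline{\bigcup_{y \in A'} P(y)_{1}} \subseteq \bigcup_{y \in A'} P(y)_{2}$, and since $A' \subseteq \bigcup_{y \in A'} P(y)_{1}$, the point $x$ lies in $P(y)_{2} \subseteq N(y)$ for some $y \in A'$, supplying $xRy$ via the second clause.

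The only delicate point is the dichotomy in the last paragraph: this is where one must remember that $\rrr$-upper-boundedness of a set of pairs upgrades to $\rr$-upper-boundedness, so that elastic axiom (2) can be fed a legitimate downward-closed family. Apart from this, every step is standard application of the definitions and Gruenhage's framework.
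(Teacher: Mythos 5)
Your proof is correct and follows essentially the same route as the paper: the same relation ($xRy$ iff $x\in N(y)$ or the pair chosen for $x$ is $\rrr$-below the pair chosen for $y$), the same near-goodness argument via elastic axioms (1)--(2) together with Proposition \ref{2.11}(1) and (3), and a singleton sticky set obtained from $\rrr$-minimality. The only difference is cosmetic: the paper first enumerates $\mathcal{P}$ so that later pairs are never $\rrr$-below earlier ones and takes $\sigma(x)$ minimal in that enumeration, whereas you assign $P(x)$ arbitrarily and invoke Proposition \ref{2.11}(2) directly on $\{P(y):y\in F\}$ in the sticky step, which is a slight streamlining of the same idea.
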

\begin{proof} Let $\mathcal{P}$ be the pair-base on $X$ with some relation $\rr$ witnessing that $X$ is elastic. There is a reflexive antisymmetric relation $\rrr$ on $\mathcal{P}$ by Proposition \ref{2.11} with the following properties:
\begin{enumerate}[(a)]
  \item if $P, P^{\prime} \in \mathcal{P}$ are such that $P_{1} \cap P_{1}^{\prime} \neq \emptyset$ then $P \rrr P^{\prime}$ or $P^{\prime} \rrr P$;
  \item if $\mathcal{P}'$ is a non-empty subset of $\mathcal{P}$, then there is a $\rrr$-minimal element of $\mathcal{P}'$;
  \item if $P \in \mathcal{P}$ and $\mathcal{P}^{\prime} \subseteq \{P^{\prime} \in \mathcal{P}: P^{\prime} \rrr P\}$ then $\overline{\cup \{P_{1}^{\prime}: P^{\prime} \in \mathcal{P}'\}} \subseteq \cup \{P_{2}^{\prime}: P^{\prime} \in \mathcal{P}'\}$.
\end{enumerate}

Let us enumerate $\mathcal{P}$ as follows. By property (b), there is an element of $\mathcal{P}$, denoted by $P^{0}$, such that $P \not\rrr P^{0}$ whenever
$P \in \mathcal{P} \setminus \{P^{0}\}$. Assume $P^{\gamma}$ has been selected for each $\gamma < \beta$,
and $P \not\rrr P^{\gamma}$ whenever $P \in \mathcal{P} \setminus \{P^{\eta}: \eta \leq \gamma\}$.
If $\mathcal{P} \setminus \{P^{\gamma}: \gamma < \beta\} \neq \emptyset$, there is an element of
$\mathcal{P} \setminus \{P^{\gamma}: \gamma < \beta\}$, denoted by $P^{\beta}$, such that
$P \not\rrr P^{\beta}$ whenever $P \in \mathcal{P} \setminus \{P^{\gamma}: \gamma \leq \beta\}$.
Thus $\mathcal{P}$ can be enumerated as $\mathcal{P} = \{P^{\beta}: \beta < \lambda \}$ such that
\begin{enumerate}[(d)]
\item $P^{\beta'} \not \rrr  P^{\beta}$ if $\beta < \beta' < \lambda$.
\end{enumerate}
Let $N$ be an ONA on $X$. We will define a relation $R$ on $X$ and apply Theorem \ref{g1}. Let $\sigma(x) = \min \{ \beta < \lambda: x \in P^{\beta}_1 \subseteq P^{\beta}_2\subseteq N(x)\}$ for $x\in X$. Let $xRy$ iff $x\in N(y)$ or $P^{\sigma(x)} \rrr P^{\sigma(y)}$. We prove that $R$ is nearly good. Suppose that $x\in\overline{A}$ however $x\notin R^{-1}(y)$ for all $y\in A$. Thus $x\notin N(y)$ and $P^{\sigma(x)}\not \rrr P^{\sigma(y)}$ for all $y\in A$. Let $A_1 = A \cap P^{\sigma(x)}_1\neq \emptyset$. Since $P^{\sigma(x)}_1 \cap P^{\sigma(y)}_1 \neq \emptyset$ we have $P^{\sigma(y)} \rrr P^{\sigma(x)}$ for all $y \in A_1$. Thus $$ \overline {\cup \{P^{\sigma(y)}_1: y \in A_1\}}\subseteq \cup \{P^{\sigma(y)}_2: y \in A_{1}\} \subseteq \cup \{N(y): y \in A_{1}\} \subseteq X\setminus\{x\}$$

using $P^{\sigma(y)}_2 \subseteq N(y)$ and $x \notin N(y)$ for $y\in A_1$. Clearly $x \in \overline{A_1}$ and $$A_1 \subseteq \cup \{P^{\sigma(y)}_1: y \in A_1\}.$$

This yields $x \in \overline{A_1} \subseteq X \setminus \{x\}$ which is a contradiction. Thus $R$ is nearly good.

Suppose that $F \subseteq X$ is closed and nonempty. We show that there is a closed discrete $D\subseteq F$ such that $D$ is N-sticky mod R on F. Let $\sigma= \min \{\sigma(y): y \in F\}$ and let $y \in F$ such that $\sigma = \sigma(y)$. Let $D = \{y\}$. Suppose $x R y$ for some $x \in F$. If $P^{\sigma(x)} \rrr P^{\sigma(y)}$ then $\sigma(x)=\sigma(y)$ since $\sigma(y)\leq\sigma(x)$ (and by property (d)). Thus $x\in P^{\sigma(x)}_1=P^{\sigma(y)}_1\subseteq P^{\sigma(y)}_2\subseteq N(y)$. If  $P^{\sigma(x)} \not\rrr P^{\sigma(y)}$ then $x \in N(y)$. Thus $D$ is $N$-sticky mod $R$ on $F$, and so by Theorem \ref{g1} there is some closed discrete $D^* \subseteq X$ such that $X = \cup N[D^*]$.
\end{proof}

\emph{Proto-metrisable} spaces were introduced by P. Nyikos in his study of nonarchimedian spaces in \cite{s17}.

 \begin{dfn} Let $X$ be a space, $\mathcal{B}$ a base for the topology. The base $\mathcal{B}$ is said to be an \emph{orthobase} if whenever $\mathcal{B}' \subseteq \mathcal{B}$, either $\cap \mathcal{B}'$ is open or $\mathcal{B}'$ is a local base for any point in $\cap \mathcal{B}'$. A space is said to be \emph{proto-metrisable} if it is paracompact and has an orthobase.
 \end{dfn}

 Gartside and Moody proved that proto-metrisable spaces are elastic {\cite[Corollary 9]{s18}}. Thus we can deduce the following corollary, which had already been obtained by Borges and Wehrly in \cite{s9}.

\begin{cor} Every proto-metrisable space is a $D$-space.
\end{cor}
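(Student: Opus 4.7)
The plan is to observe that this corollary is an immediate composition of two results already in hand, so no new argument is really needed; the proof is essentially a one-line citation chain. First I would invoke the cited result of Gartside and Moody \cite{s18}, which asserts that every proto-metrisable space carries a pair-base together with a transitive relation exhibiting it as an elastic space. Then I would apply the theorem proved just above, which shows that every elastic space is a $D$-space. Concatenating the two implications gives the conclusion.

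Since there is no technical obstacle, the main thing to be careful about is simply to state the chain cleanly: let $X$ be proto-metrisable, use \cite[Corollary 9]{s18} to conclude that $X$ is elastic, and then apply the preceding theorem of this section to conclude that $X$ is a $D$-space. No new construction, relation, neighborhood assignment, or appeal to Gruenhage's sticky relation machinery is necessary at this stage, because all of that work has already been carried out inside the proof that elastic spaces are $D$-spaces. In particular, one does not need to unfold the orthobase or the paracompactness hypothesis of the proto-metrisable definition directly; these are consumed inside the Gartside--Moody implication.

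If anything deserves emphasis in the write-up, it is just to remind the reader that the authors Borges and Wehrly \cite{s9} had already established this corollary by a different route, so the present derivation should be advertised as a new (and shorter) proof rather than a new theorem. Thus the entire argument reduces to: \emph{$X$ proto-metrisable $\Rightarrow$ $X$ elastic (by \cite[Corollary 9]{s18}) $\Rightarrow$ $X$ is a $D$-space (by the previous theorem).}
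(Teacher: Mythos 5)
Your proposal matches the paper's own derivation exactly: the corollary is obtained by citing Gartside--Moody \cite[Corollary 9]{s18} that proto-metrisable spaces are elastic and then applying the theorem that elastic spaces are $D$-spaces, with the remark that Borges and Wehrly \cite{s9} had proved it earlier. Nothing is missing; this is the intended one-line argument.
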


Finally, let us mention a long standing problem of Borges and Wehrly. In \cite{s8}, the authors asked whether monotonically normal paracompact spaces are D-spaces. Almost twenty years past, this question remains open. The following implications can be proved; for details see \cite{stratel}, \cite{s4} and \cite{s3}.

$$\text{metrisable}\Rightarrow \text{(linearly-)stratifiable} \Rightarrow \text{\textbf{elastic}} \Rightarrow$$ $$\Rightarrow \text{well-ordered (F)} \Rightarrow \text{monotone normal and (her.) paracompact}$$

 Since we know that elastic spaces are D-spaces, we think the following question is valuable to study.

\begin{prob} Are well-ordered $(F)$ spaces $D$-spaces?
\end{prob}

We mention that Y. Z. Gao, H. Z. Qu and S. T.Wang gave an interesting characterization for monotonically normal paracompact spaces in \cite{s19}.


\begin{thebibliography}{9}

\bibitem{s2} Z. Balogh, Topological spaces with point-networks, Pro. Amer. Math. Soc. 94 (1985), 497-501.

\bibitem{s8} C. R. Borges, A. C. Wehrly, A study of $D$-spaces, Topology Proc. 16 (1991), 7-15.

\bibitem{s9} C. R. Borges, A. C. Wehrly, Another study of $D$-spaces, Questions and Answers in Gen. Topology 14 (1996), 73-76.

\bibitem{s15} J. G. Ceder, Some generalizations of metric spaces, Pacific J. Math. 11 (1961), 105-125.

\bibitem{s1} P. J. Collins, A. W. Roscoe, Criteria for metrisability, Pro. Amer. Math. Soc. 90 (1984), 631-640.

\bibitem{s3} P. J. Collins, G. M. Reed, A. W. Roscoe, M. E. Rudin, A lattice of conditions on topological spaces, Pro. Amer. Math. Soc. 94 (1985), 487-496.

\bibitem{s7}  E. K. van Douwen, W. F. Pfeffer, Some properties of the Sorgenfrey line and related spaces, Pacific J. Math. 81 (1979), 371-377.

\bibitem{open}  Todd Eisworth, On D-spaces, Open Problems in Topology II, Elliott Pearl ed., Chapter 1 129-134, Elsevier Publishing, Amsterdam, The Netherlands, 2007.

    \bibitem{s19} Y. Z. Gao, H. Z. Qu, S. T.Wang, A note on monotonically normal spaces, Acta Math. Hungar. 117 (2007), 175-178.

    \bibitem{s18} P. M. Gartside, P. J. Moody, Proto-metrisable spaces are elastic, Topology Appl. 79 (1997), 49-62.

        \bibitem{s4} P. M. Gartside, P. J. Moody, Well-ordered $(F)$ spaces, Topology Proc. 17 (1992), 111-130.

\bibitem{gnote} G. Gruenhage, A note on D-spaces, Topology and its Appl. 153(2006), 2229--2240.

\bibitem{gg} G. Gruenhage, A survey of D-spaces, Contemporary Math, to appear. (http://auburn.edu/~gruengf/papers/dsurv7.pdf)

\bibitem{genmet} G. Gruenahge, Generalized metric spaces, Handbook of Set-Theoretic Topology, Edited by K. Kunen and J. E. Vaughan, Chapter 10, 423-503, Elsevier Science Publishers B.V., Amsterdam, The Netherlands, 1984.

\bibitem{stratel} J. M. Harris, On stratifiable and elastic spaces, Pro. Amer. Math. Soc. Vol. 122, No. 3, Nov. 1994, pp. 925-929.

    \bibitem{s17} P. J. Nyikos, Some surprising base properties in topology, in: Stavrakas and Allen. eds., Studies in Topology, 427-450.

\bibitem{s5} I. S. Stares, Borges normality and generalized metric spaces, Topology Proc. 19 (1994), 277-305.

\bibitem{s6} I. S. Stares, Concerning the Dugundji extension property, Topology Appl. 63 (1995), 165-172.

\bibitem{s16} H. Tamano, J. E. Vaughan, Paracompactness and elastic spaces, Pro. Amer. Math. Soc. 28 (1971), 299-303.
\bibitem{vaugh} J. E. Vaughan, Linearly stratifiable spaces, Pacific J. Math. Volume 43, Number 1 (1972), 253-266.


\end{thebibliography}
\end{document}